\newcommand{\lyxaddress}[1]{
\par {\raggedright #1
\vspace{1.4em}
\noindent\par}
}
\theoremstyle{plain}
\newtheorem{thm}{\protect\theoremname}
\theoremstyle{plain}
\newtheorem{lem}[thm]{\protect\lemmaname}
\theoremstyle{definition}
\newtheorem{example}[thm]{\protect\examplename}
\theoremstyle{definition}
\newtheorem{defn}[thm]{\protect\definitionname}
\theoremstyle{remark}
\newtheorem{rem}[thm]{\protect\remarkname}
\newenvironment{proof}[1][\protect\proofname]{\par
\normalfont\topsep6\p@\@plus6\p@\relax
\trivlist
\itemindent\parindent
\item[\hskip\labelsep
\scshape
#1]\ignorespaces
}{%
\endtrivlist\@endpefalse
}
\providecommand{\proofname}{Proof}
\providecommand{\definitionname}{Definition}
\providecommand{\examplename}{Example}
\providecommand{\lemmaname}{Lemma}
\providecommand{\remarkname}{Remark}
\providecommand{\theoremname}{Theorem}
\begin{document}

\title{Semiparametric testing of statistical functionals revisited}

\author{Vladimir Ostrovski}

\date{30.03.2012}
\maketitle
\begin{abstract}
Along the lines of Janssen's and Pfanzagl's work the testing theory
for statistical functionals is further developed for non-parametric
one-sample problems. Efficient tests for the one-sided and two-sided
problems are derived for nonparametric statistical functionals. The
asymptotic power function is calculated under implicit alternatives
and hypotheses, which are given by the functional itself, for the
one-sided and two-sided cases. Under mild regularity assumptions is
shown that these tests are asymptotic most powerful. The combination
of the modern theory of Le Cam and approximation in limit experiments
provide a deep insight into the upper bounds for asymptotic power
functions tests for the one-sided and two-sided problems of hypothesis
testing. As example tests concerning the von Mises functional are
treated in nonparametric context.
\end{abstract}

\lyxaddress{Dr Vladimir Ostrovski, Niederbeckstr., 23, 40472 Düsseldorf; email:
vladimir.ostrovski@web.de}

\section{Introduction}

Varios statistical problems can be formulate as problems of hypothesis
testing of the real-valued statistical functionals. For the non-parametric
statistics the theory of testing statistical functionals is the natural
point of view, which due to Pfanzagl, Wefelmayer (1982, 1985), Jansson
(1999, 2000) and some other. Moreover some already known important
tests can be understood as tests for statistical functionals. They
are studied exact with the general theory.

The statistical model is given by a set $\mathcal{P}\subset\mathcal{M}_{1}(\Omega,\mathcal{A})$
of relevant probability measures which is a subset of the set $\mathcal{M}_{1}(\Omega,\mathcal{A})$
of all probability measures on the some measure space $\Omega,\mathcal{A}$.
We observe $n\in\mathbb{N}$ independent identically distributed replication
$X_{1},\ldots,X_{n}$ of random variables which have a distribution
$P\in\mathcal{P}$. Let $\pi_{i}$ denote the $i$-th canonical projection.
Then we set $X_{i}:=\mathcal{L}\left(\pi_{i}|P^{n}\right)$. Let $k:\mathcal{P}\rightarrow\mathbb{R},P\mapsto k(P)$
be a real-valued statistical functional. A test problem is often given
by the one-sided hypothesis 
\begin{equation}
H_{1}=\left\{ P\in\mathcal{P}:k(P)\leq a\right\} \mbox{ against }K_{1}=\left\{ P\in\mathcal{P}:k(P)>a\right\} \label{one_sided_problem}
\end{equation}
or two-sided hypothesis 
\begin{equation}
H_{2}=\left\{ P\in\mathcal{P}:k(P)=a\right\} \mbox{ against }K_{1}=\left\{ P\in\mathcal{P}:k(P)\neq a\right\} \label{two_sided_problem}
\end{equation}
for some $a\in\left\{ k(P):P\in\mathcal{P}\right\} \subset\mathbb{R}.$
Here has to be noticed that not every test problem can be taken to
this form. For many purposes this is, however, sufficient. The treatment
of the more general statistical functionals and test problems is a
subject for future research.

An outline for this paper is as follows. Section 2 describes the differentiable
statistical functionals and the other basics. Section 2 provides implicit
alternatives and hypotheses which are given by the functional itself.
The tests for implicit hypotheses against implizit alternatives are
given. Section 3 studies asymptotic power functions of these tests.
Section 4 deals with the local asymptotic efficiency and optimality
of tests. Section includes proofs and concludes the paper.

\section{Differentiable statistical functionals}

Let be $P_{0}\in\mathcal{P}$ be fixed. The local geometry of the
set $\mathcal{P}$ of probability measures is discribed by all $L_{2}(P_{0})$-differentiable
curves $t\mapsto P_{t}$ and their tangents $g\in L_{2}(P_{0})$,
see Bickel et. al. (1993) and Strasser (1985,1998). A function $f:(-\varepsilon,\varepsilon)\rightarrow\mathcal{P},t\mapsto P_{t}$
for some $\varepsilon>0$ is called $L_{2}(P_{0})$-differentiable
curve in $\mathcal{P}$ with tangent $g\in L_{2}(P_{0})$ if 
\begin{equation}
\lim_{t\rightarrow0}\left\Vert \frac{2}{t}\left(\left(\frac{dP_{t}}{dP_{0}}\right)^{\frac{1}{2}}-1\right)-g\right\Vert _{L_{2}(P_{0})}=0,\label{l2_diff1}
\end{equation}
\begin{equation}
\lim_{t\rightarrow0}\frac{1}{t^{2}}P_{t}\left(\left\{ \frac{dP_{t}}{dP_{0}}=\infty\right\} \right)=0,\label{l2_diff2}
\end{equation}
there $\left\Vert g\right\Vert _{L_{2}(P_{0})}=\left(\int g^{2}dP_{0}\right)^{\frac{1}{2}}$
is the $L_{2}(P_{0})$-norm of $g$. Note that $\int gdP_{0}=0$ for
a tangent $g$ and 
\begin{equation}
\left.\frac{d}{dt}\int h\, dP_{t}\right|_{t=0}=\int hg\, dP_{0}\label{l2_diff3}
\end{equation}
for any bounded function $h:(\Omega,\mathcal{A})\rightarrow(\mathbb{R},\mathcal{B}),$
see Bickel et. al. (1993) or Strasser (1985) for more information
about $L_{2}(P_{0})$-differentiable curves. Set 
\[
L_{2}^{(0)}(P_{0})=\left\{ h\in L_{2}(P_{0}):\int hdP_{0}=0\right\} .
\]

There is the concept of the one-sided $L_{2}(P_{0})$-differentiable
curves $f:[0,\varepsilon)\rightarrow\mathcal{P},t\mapsto P_{t}$ which
assumes the one-sided limits $t\downarrow0$ in (\ref{l2_diff1})
and (\ref{l2_diff2}), see van der Vaart (1988). For our purposes
the one-sided $L_{2}(P_{0})$-differentiable curves are not necessary
because in the non-parametric statistics the set of the probability
measures is not not fixed exactly. Moreover the one-sided $L_{2}(P_{0})$-differentiable
curves can be extended to two-sided $L_{2}(P_{0})$-differentiable
curves. The following small lemma is useful for the work with the
$L_{2}(P_{0})$-differentiable curves.
\begin{lem}
\label{L2_diff_skalierung} Let $t\mapsto P_{t}$ be a $L_{2}(P_{0})$-differentiable
curve in $\mathcal{P}$ with tangent $g\in L_{2}(P_{0})$. For any
$a\in\mathbb{R}$ the curve $t\mapsto P_{at}$ is then $L_{2}(P_{0})$-differentiable
with tangente $ag$. 
\end{lem}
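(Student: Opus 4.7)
The plan is to reduce both defining conditions (\ref{l2_diff1}) and (\ref{l2_diff2}) for the scaled curve $t\mapsto P_{at}$ to the known conditions for $t\mapsto P_{t}$ by a straightforward change of variable $s=at$, treating the degenerate case $a=0$ separately.

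First I would dispose of $a=0$: then $t\mapsto P_{at}$ is the constant curve $P_{0}$, so $\left(dP_{at}/dP_{0}\right)^{1/2}-1\equiv 0$ and the set $\{dP_{at}/dP_{0}=\infty\}$ is empty, so both limits are trivially zero and the tangent is $0=0\cdot g$, as required.

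Next, for $a\neq 0$, I would rewrite
\[
\frac{2}{t}\left(\left(\frac{dP_{at}}{dP_{0}}\right)^{1/2}-1\right)-ag \;=\; a\left[\frac{2}{at}\left(\left(\frac{dP_{at}}{dP_{0}}\right)^{1/2}-1\right)-g\right].
\]
Setting $s=at$, the bracketed expression tends to $0$ in $L_{2}(P_{0})$-norm as $t\to 0$ by hypothesis (\ref{l2_diff1}) applied to the original curve, and multiplication by the scalar $a$ preserves this convergence. Similarly, for (\ref{l2_diff2}) I would write
\[
\frac{1}{t^{2}}P_{at}\!\left(\left\{\frac{dP_{at}}{dP_{0}}=\infty\right\}\right)=a^{2}\cdot\frac{1}{(at)^{2}}P_{at}\!\left(\left\{\frac{dP_{at}}{dP_{0}}=\infty\right\}\right),
\]
which again converges to $0$ by (\ref{l2_diff2}) for the original curve after the substitution $s=at$.

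There is no real obstacle here; the only subtlety worth flagging is that the substitution $s=at$ requires $a\neq 0$ (hence the separate treatment of $a=0$) and that when $a<0$ the curve is traversed in the opposite direction, but since the two-sided limit $t\to 0$ is assumed throughout and the $L_{2}$-norm together with the probability mass expression are insensitive to this sign, the argument goes through unchanged. Collecting the two verifications yields the claimed $L_{2}(P_{0})$-differentiability of $t\mapsto P_{at}$ with tangent $ag$.
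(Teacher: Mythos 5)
Your proof is correct and follows essentially the same route as the paper: dispose of $a=0$ trivially, then for $a\neq 0$ pull out the factor $a$ (respectively $a^{2}$) and apply the change of variable $s=at$ to reduce both conditions (\ref{l2_diff1}) and (\ref{l2_diff2}) to those of the original curve. Nothing further is needed.
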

Let $K(P_{0},\mathcal{P})$ be a set of tangents of all $L_{2}(P_{0})$-differentiable
curves in $\mathcal{P}$. Lemma \ref{L2_diff_skalierung} implies,
that $K(P_{0},\mathcal{P})$ is a cone. The tangent space $T(P_{0},\mathcal{P})$
of $P_{0}$ in $\mathcal{P}$ is defined as a $L_{2}(P_{0})$-closure
of $K(P_{0},\mathcal{P})$. Tangent spaces was introduced by Pfanzagl
and Wefelmeyer(1982, 1985) and found an extended application in the
works of Janssen (1999, 2000). The concept of differentiable statistical
functionals due to Pfanzagl is motivated by the local linear approximation
of the differentiable functions, that is Taylor expansion of first
order. In addition assume that this approximation depends on the tangent
of $L_{2}(P_{0})$-differentiable curve linearly. A statistical functional
$k:\mathcal{P}\rightarrow\mathbb{R}$ is called differentiable at
$P_{0}$ with gradient $\dot{k}\in L_{2}^{(0)}(P_{0})$ if 
\begin{equation}
\lim_{t\downarrow0}\frac{1}{t}(k(P_{t})-k(P_{0}))=\int\dot{k}g\, dP_{0}\label{diff_func}
\end{equation}
for all $L_{2}(P_{0})$-differentiable curves $t\mapsto P_{t}$ in
$\mathcal{P}$ with tangent $g$. Note that the gradient $\dot{k}$
is not uniquely determined in general and it depend on $P_{0}$. The
one-sided limit in \ref{diff_func} does not represent any restriction.
Let $t\mapsto P_{t}$ be $L_{2}(P_{0})$-differentiable curve in $\mathcal{P}$
with tangent $g$. The curve $t\mapsto P_{-t}$ is $L_{2}(P_{0})$-differentiable
with tangent $-g$. By equation (\ref{diff_func}) we obtain 
\begin{equation}
\lim_{t\uparrow0}\frac{1}{t}\left(k\left(P_{t}\right)-k\left(P_{0}\right)\right)=\lim_{t\downarrow0}\frac{1}{\left(-t\right)}\left(k\left(P_{-t}\right)-k\left(P_{0}\right)\right)=\int\dot{k}g\, dP.
\end{equation}
The exists a unique canonical gradient $\widehat{k}\in T(P_{0}),\mathcal{P})$,
which has the smallest $L_{2}(P_{0})$-norm among all gradients. The
canonical gradient can be calculated as orthogonal projection of some
gradient $\dot{k}$ on the tangent space $T(P_{0},\mathcal{P})$.
\begin{example}
{[}von Mises functional{]} Let $h\in L_{1}(P)$ for all $P\in\mathcal{P}$.
The function $k:\mathcal{P}\rightarrow\mathbb{R},P\mapsto\int h\, dP$
is called von Mises functional. Let $t\mapsto P_{t}$ be $L_{2}(P_{0})$-differentiable
curve in $\mathcal{P}$ with tangent $g$. If $E_{P_{0}}(h^{2})<infty$
and $\limsup_{t\downarrow0}E_{P_{t}}(h^{2})$ hold then we have 
\begin{equation}
\left.\frac{d}{dt}k(P_{t})\right|_{t=0}=\int gh\, dP_{0}=\int g(h-E_{P_{0}})(h))dP_{0},\label{mises_diff}
\end{equation}
see Bickel et. al. (1993), p. 457, Proposition 2 for the proof. 

The conditions for the differentiability can be formulated in the
non-parametric context by means of Hellinger distance. Let $d(P,Q)$
denote the Hellinger distance of probability measures $P,Q\in\mathcal{M}_{1}(\Omega,\mathcal{A})$,
see \cite{Strasser:85} for the definition of the Hellinger distance.

\label{top_equi} The following three statements are equivalent: 

(a) $\limsup_{n\rightarrow\infty}E_{P_{n}}\left(h^{2}\right)<\infty$
for all sequences $\left(P_{n}\right)_{n\in\mathbb{N}}$ in $\mathcal{P}$
such that $\lim\limits _{n\rightarrow\infty}d\left(P_{n},P_{0}\right)=0.$

(b) There exists $\varepsilon>0$ such that $\sup\{E_{P}\left(h^{2}\right):P\in\mathcal{P}\mbox{ with }d(P,P_{0})<\varepsilon\}<\infty.$

(c) There exist $\varepsilon>0$ and $K>0$ such that $E_{P}\left(h^{2}\right)<K$
for all $P\in\mathcal{P}$ with $d\left(P,P_{0}\right)<\varepsilon$. 

By (\ref{mises_diff}) the functional $k$ is differentiable at $P_{0}$
if one of the statements of Lemma \ref{top_equi} is fulfilled. The
function $h-E_{P_{0}}$ is then a gradient of $k$ at $P_{0}$ and
$h-E_{P_{0}}\in L_{2}^{(0)}(P_{0})$. If $h-E_{P_{0}}\in T(P_{0},\mathcal{P})$
holds then $h-E_{P_{0}}$ is already the canonical gradient. For most
situations of the non-parametric statistics it can be assumed that
the condition $h-E_{P_{0}}\in T(P_{0},\mathcal{P})$ is satisfied
at all $P_{0}\in\mathcal{P}.$ 
\end{example}

\begin{example}
{[}Median functional{]} Let $\mathcal{P}\subset\mathcal{M}_{1}(\mathbb{R},\mathcal{B})$
be a set of probability measures with connected support and strictly
positive Lebesgue density on its support. Let $\mbox{med(P)}$ be
the unique defined median of $P$. If the Lebesgue density $f_{P_{0}}$
of $P_{0}\in\mathcal{P}$ is continuous in the some neighbourhood
of $\mbox{med}(P_{0})$, then the functional $k:\mathcal{P}\rightarrow\mathbb{R},P\mapsto\mbox{med(P)}$
is differentiable at $P_{0}$ with gradient 
\begin{equation}
\dot{k}(x)=\left(2f_{P_{0}}(\mbox{med}(P_{0}))\right)^{-1}\mbox{sign}\left(F_{P_{0}}(x)-\frac{1}{2}\right)\label{med_grad}
\end{equation}
where $F_{P_{0}}$ denotes the distribution function of $P_{0}$,
see Pfanzagl and Wefelmayer (1985), p. 150, Proposition 5.5.4 for
the proof. 
\end{example}

\begin{example}
{[}multinomial distributions{]} Let $\Omega={a_{1},\ldots,a_{m}}$
be finite set for some $m\in\mathbb{N}$ and $\mathcal{A}$ be the
power set of $\Omega$. Let $\mathcal{P}=\mathcal{M}_{1}(\Omega,\mathcal{A})$.
Any statistical functional $k:\mathcal{P}\rightarrow\mathbb{R}$ has
a representation 
\[
k(P)=f(P(\{a_{1}\}),\ldots,P(\{a_{m}\}))
\]
for some $f:[0,1]^{m}\rightarrow\mathbb{R}$. Let $t\mapsto P_{t}$
be $L_{2}(P_{0})$-differentiable curve in $\mathcal{P}$ with tangent
$g$. Assume that $f$ is differentiable at $(p_{1},\ldots,p_{m}):=(P_{0}(\{a_{1}\}),\ldots,P_{0}(\{a_{m}\}))$
and denote $w_{i}=\left.\frac{d}{dx_{i}}f(x_{1},\ldots,x_{m})\right|_{(p_{1},\ldots,p_{m})}$.
By (\ref{l2_diff3}) we obtain 
\begin{equation}
\left.\frac{d}{dt}k(P_{t})\right|_{t=0}=\sum_{i=1}^{m}w_{i}\int\mathbf{1}_{\{a_{i}\}}g\, dP_{0}=\int g\left(\sum_{i=1}^{m}w_{i}\mathbf{1}_{\{a_{i}\}}\right)dP_{0}.\label{multi_gradient}
\end{equation}
Thus the functional $k$ is differentiable at $P_{0}$ with the gradient
\begin{equation}
\dot{k}=\sum_{i=1}^{m}w_{i}\mathbf{1}_{\{a_{i}\}}-E_{P_{0}}\left(\sum_{i=1}^{m}w_{i}\mathbf{1}_{\{a_{i}\}}\right)=\sum_{i=1}^{m}w_{i}(\mathbf{1}_{\{a_{i}\}}-p_{i}),
\end{equation}
which depends on $P_{0}$. 
\end{example}

\section{Implicit alternatives and hypotheses }

We fix a particilar $P_{0}\in H_{2}$ and assume further that a functional
$k:\mathcal{P}\rightarrow\mathbb{R}$ is differentiable at $P_{0}$
with the canonical gradient $\widetilde{k}\in T(P_{0},\mathcal{P})$
and 
\begin{equation}
\int\widetilde{k}^{2}dP_{0}\neq0.\label{not_null}
\end{equation}
The assumption (\ref{not_null}) is necessary. However, there are
some important cases in nonparametric statistics such that it is not
fulfilled. Let $k:\mathcal{P}\rightarrow\mathbb{R}$ be a some distance
between $P$ and $P_{0}.$ The following two cases are then possible.
Either the statistical functional $k$ is not differentiable at $P_{0}$
or the canonical gradient $\widetilde{k}$ equals $0\in L_{2}(P_{0})$.

We keep in mind that the canonical gradient $\widetilde{k}$ depends
on $P_{0}$ in general. For the one-sided and two-sided test problems
we look for local parametrization which depends only on the statistical
functional $k$. The basic idea of the implicit alternatives due to
Pfanzagl and Wefelmayer (1982, Section 8; 1985, Section 6). The reader
finds a discussion and interpretation about this in Janssen (1999a,
1999b). The implicit alternatives and hypotheses are represented formalized
here and generalized a little. 
\begin{defn}
\label{set_f} The set $\mathcal{F}$ of all imlizit alternatives
and hypotheses contains all sequences $(P_{t_{n}})_{n\in\mathbb{N}}$
of probability measures which satisfy the following requirements: 

The sequence $t_{n}\in[0,\infty)$ fulfills $\lim_{n\rightarrow\infty}t_{n}\sqrt{n}>0$. 

There exist some $\varepsilon>0$ and $L_{2}(P_{0})$-differentiable
curve $f:(-\varepsilon,\varepsilon)\rightarrow\mathcal{P},t\mapsto P_{t}$
such that $f(t_{n})=P_{t_{n}}$ holds for all $n\in\mathbb{N}$ 
\end{defn}

\begin{defn}
Implicit alternatives and hypotheses for $H_{1}$ against $K_{1}${]}
The set $\mathcal{K}_{1}$ of the implicit alternatives is given by
The set $\mathcal{H}_{1}$ of the implicit hypotheses is given by
\[
\mathcal{H}_{1}:=\left\{ (P_{t_{n}})_{n\in\mathbb{N}}\in\mathcal{F}:\lim_{n\rightarrow\infty}\sqrt{n}(k(P_{t_{n}})-k(P_{0}))\leq0\right\} .
\]

\end{defn}
In Definition \ref{set_f} would suffice if a sequence $(P_{t_{n}})_{n\in\mathbb{N}}$
lies in the set $\mathcal{P}$. The appropriate $L_{2}(P_{0})$-differentiable
curve does not need to lie in $\mathcal{P}$. However the established
elegant definion is chosen here, see also Janssen(1999 a,b). In this
case the set $K(P_{0},\mathcal{P})$ of tangents describes the local
properties of the sets $\mathcal{K}_{1}$ of all implicit alternatives
and $\mathcal{H}_{1}$ of all implicit hypotheses. Hence the generalization
of the definition is renounced. In addition, the difference is not
really significant because in the non-parametric statistics the set
$\mathcal{P}$ is not fixed exactly enough. The local modeling happens
much more via the set $K(P_{0},\mathcal{P})$ of tangents. The one-sided
test problem $H_{1}$ against $K_{1}$ is asymptotically equivalent
to the test problem $\mathcal{H}_{1}$ against $K(P_{0},\mathcal{P})$
local in the neighbourhood of $P_{0}$, see Janssen(1999 a,b) for
details. For the two-sided test problem $H_{2}$ against $K_{2}$
we obtain analogously the following implicit alternatives and hypotheses. 
\begin{defn}
{[}Implicit alternatives hypotheses for $H_{2}$ against $K_{2}${]}
The set $\mathcal{K}_{2}$ of the implicit alternatives is given by
\[
\mathcal{K}_{2}:=\left\{ (P_{t_{n}})_{n\in\mathbb{N}}\in\mathcal{F}:\lim_{n\rightarrow\infty}\sqrt{n}(k(P_{t_{n}})-k(P_{0}))\neq0\right\} .
\]
The set $\mathcal{H}_{2}$ of the implicit hypotheses is given by
\[
\mathcal{H}_{2}:=\left\{ (P_{t_{n}})_{n\in\mathbb{N}}\in\mathcal{F}:\lim_{n\rightarrow\infty}\sqrt{n}(k(P_{t_{n}})-k(P_{0}))=0\right\} .
\]

\end{defn}
We solve the local test problems $\mathcal{H}_{1}$ against $\mathcal{K}_{1}$
and $\mathcal{H}_{2}$ against $\mathcal{K}_{2}$. Thus we derive
the sequences of the tests for local test problems, which usually
depends on $P_{0}$. If there exists a distribution-free form of these
tests then the original one- and two-sided test problems are solved
too. The construction of the distribution-free tests is not an aim
of this work. There exist an extensive literature about it, see Hájek
et al. (1999) and Janssen (1999,2000) for example.

\section{Tests and their asymptotic power functions}

Let $\left\Vert h\right\Vert :=\left(\int h^{2}dP_{0}\right)^{\frac{1}{2}}$
denote the $L_{2}(P_{0})$-norm of $h\in L_{2}(P_{0})$. A meaningful
test for $\mathcal{H}_{1}$ against $\mathcal{K}_{1}$ is defined
by 
\begin{equation}
\varphi_{1n}(X_{1},\ldots,X_{n})=\mathbf{1}_{\left(u_{1-\alpha}\left\Vert \widetilde{k}\right\Vert ,+\infty\right)}\left(\frac{1}{\sqrt{n}}\sum_{i=1}^{n}\widetilde{k}(X_{i})\right)\label{one-sided-test}
\end{equation}
and a reasonable test for $\mathcal{H}_{2}$ against $\mathcal{K}_{2}$
is given by 
\begin{equation}
\varphi_{2n}(X_{1},\ldots,X_{n})=\mathbf{1}_{\left(u_{1-\frac{\alpha}{2}}\left\Vert \widetilde{k}\right\Vert ,+\infty\right)}\left(\left|\frac{1}{\sqrt{n}}\sum_{i=1}^{n}\widetilde{k}(X_{i})\right|\right),\label{two-sided-test}
\end{equation}
where $u_{1-\alpha}$ is the $1-\alpha$ quantile of the standard
normal distribution. The detailed motivation of these tests can be
found in Pfanzagl and Wefelmayer(1982, 1985) and Jansson(1999 a,b).
Next we compute the asymptotic power function of tests $\varphi_{1n}$
and $\varphi_{2n}$ along implicit alternatives and hypotheses exactly. 
\begin{thm}
\label{asymp_power_function} For each sequence $(P_{t_{n}})_{n\in\mathbb{N}}\in\mathcal{F}$
we have 
\begin{equation}
\lim_{n\rightarrow\infty}E_{P_{t_{n}}^{n}}(\varphi_{1n})=\Phi\left(\frac{\vartheta}{\|\widetilde{k}\|}-u_{1-\alpha}\right),\label{one_sided_power}
\end{equation}
\begin{equation}
\lim_{n\rightarrow\infty}E_{P_{t_{n}}^{n}}(\varphi_{2n})=\Phi\left(\frac{\vartheta}{\|\widetilde{k}\|}-u_{1-\frac{\alpha}{2}}\right)+\Phi\left(-\frac{\vartheta}{\|\widetilde{k}\|}-u_{1-\frac{\alpha}{2}}\right),\label{two_sided_power}
\end{equation}
where $\vartheta:=\lim_{n\rightarrow\infty}\sqrt{n}(k(P_{t_{n}})-k(P_{0}))$
is an additional local parameter. 
\end{thm}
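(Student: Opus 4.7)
The plan is to reduce the computation to a direct application of Le Cam's third lemma combined with the $L_2(P_0)$-differentiability provided by the curve underlying each $(P_{t_n})_{n\in\mathbb{N}} \in \mathcal{F}$. Write $c := \lim_n \sqrt{n}\, t_n \in (0, \infty]$, which exists and is positive by Definition \ref{set_f}. I first treat the generic case $c < \infty$, in which $t_n \downarrow 0$ and both the differentiability of the curve $f$ and of the functional $k$ apply directly.

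First, I invoke the classical LAN expansion for $L_2(P_0)$-differentiable curves (Strasser 1985 or Bickel et al.\ 1993): conditions (\ref{l2_diff1}) and (\ref{l2_diff2}) yield, under $P_0^n$,
\begin{equation*}
\log \frac{dP_{t_n}^n}{dP_0^n} \;=\; (\sqrt{n}\, t_n) \cdot \frac{1}{\sqrt{n}}\sum_{i=1}^n g(X_i) \;-\; \frac{1}{2} (\sqrt{n}\, t_n)^2 \|g\|^2 \;+\; o_{P_0^n}(1),
\end{equation*}
where $g$ is the tangent of $f$. The multivariate CLT under $P_0^n$, applied jointly to $\widetilde{k}$ and $g$, gives a bivariate normal limit of the pair $\bigl( \frac{1}{\sqrt{n}}\sum \widetilde{k}(X_i),\, \log \frac{dP_{t_n}^n}{dP_0^n} \bigr)$ with asymptotic covariance $c \int \widetilde{k}\, g\, dP_0$ between the two coordinates.

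Second, Le Cam's third lemma transfers this to $P_{t_n}^n$, yielding weak convergence of $\frac{1}{\sqrt{n}}\sum \widetilde{k}(X_i)$ to the normal law with mean $c \int \widetilde{k}\, g\, dP_0$ and variance $\|\widetilde{k}\|^2$. The differentiability (\ref{diff_func}) of $k$ at $P_0$ along the curve $f$ identifies this limiting mean with the local parameter:
\begin{equation*}
\vartheta \;=\; \lim_n \sqrt{n}\, t_n \cdot \frac{k(P_{t_n}) - k(P_0)}{t_n} \;=\; c \int \widetilde{k}\, g\, dP_0.
\end{equation*}
Since the thresholds $\pm u_{1-\alpha} \|\widetilde{k}\|$ and $\pm u_{1-\alpha/2} \|\widetilde{k}\|$ are continuity points of the normal limit and both test functions are bounded indicators, the Portmanteau theorem turns weak convergence into convergence of expectations; a direct standardization of the resulting normal tail probabilities then yields (\ref{one_sided_power}) and (\ref{two_sided_power}).

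The step I expect to require the most care is the LAN expansion itself: one needs to verify that conditions (\ref{l2_diff1})--(\ref{l2_diff2}) suffice for the log-likelihood asymptotics when the perturbation is driven by the array $\sqrt{n}\, t_n$, in particular for controlling the $o_{P_0^n}(1)$ remainder. The remaining subtle case is $c = +\infty$: here contiguity may fail, but a separate argument showing that $\frac{1}{\sqrt{n}}\sum \widetilde{k}(X_i)$ diverges in $P_{t_n}^n$-probability matches the right-hand sides under the natural reading $|\vartheta| = \infty$, both formulas then collapsing to the value $1$.
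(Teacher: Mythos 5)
Your main argument is essentially the paper's own proof: joint CLT under $P_0^n$ for the test statistic and the (LAN-expanded) log-likelihood ratio, Le Cam's third lemma to shift the mean by the asymptotic covariance $c\int\widetilde{k}g\,dP_0$, identification of that mean with $\vartheta$ via the differentiability (\ref{diff_func}) of $k$, and Portmanteau plus standardization to get (\ref{one_sided_power}) and (\ref{two_sided_power}). Two small points of comparison. First, the paper splits off the degenerate case $\int g^{2}dP_{0}=0$ and handles it by total-variation convergence $|P_{t_n}^n-P_0^n|\to 0$ (Strasser, Theorem 75.8), whereas you fold it into the third lemma; that is fine provided you use a version of the third lemma valid when the log-likelihood limit is the point mass at $0$ (i.e.\ $\sigma^2=0$), which should be stated explicitly since some textbook formulations assume a nondegenerate Gaussian limit. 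Second, your excursion into $c=+\infty$ is outside the theorem's intended scope: the theorem presupposes that $\vartheta=\lim\sqrt{n}\,(k(P_{t_n})-k(P_0))$ exists as a finite local parameter, and the paper's proof takes $t=\lim\sqrt{n}\,t_n$ finite throughout; moreover your claim that both power formulas then collapse to $1$ is not justified in general --- if $\int\widetilde{k}g\,dP_0=0$ and $\sqrt{n}\,t_n\to\infty$, the statistic need not diverge under $P_{t_n}^n$ and contiguity can fail, so that regime would require a genuinely different argument. Dropping that aside, or restricting to finite $c$ as the paper does, leaves a correct proof along the same lines as the original.
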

Equation (\ref{one_sided_power}) of theorem \ref{asymp_power_function}
coincides with Janssen(1999a), theorem 3.1(a). The proof of theorem
\ref{asymp_power_function} follows the canonical lines of Le Cam's
theory and is presented in section \ref{proofs}. 

The results of theorem \ref{asymp_power_function} have some interesting
consequences for tests $\varphi_{1n}$ and $\varphi_{2n}$. 
\begin{itemize}
\item \emph{Local asymptotic unbiasedness:} For each implicit hypothesis
$(P_{t_{n}})_{n\in\mathbb{N}}\in\mathcal{H}_{i}$ we have $lim_{n\rightarrow\infty}E_{P_{t_{n}}^{n}}(\varphi_{in})\leq\alpha$
for $i=1,2.$ 
\item \emph{Level $\alpha$ property:} For each implicit alternative $(P_{t_{n}})_{n\in\mathbb{N}}\in\mathcal{K}_{i}$
we get $lim_{n\rightarrow\infty}E_{P_{t_{n}}^{n}}(\varphi_{in})\geq\alpha$
for $i=1,2.$ 
\item \emph{Independence on the $L_{2}(P_{0})$-differentiable curves:}
The asymptotic power functions (\ref{one_sided_power}) and (\ref{two_sided_power})
depend only on a local parameter $\vartheta\in\mathbb{R}$ and the
$L_{2}(P_{0})$-norm $\|\widetilde{k}\|$ of the canonical gradient.
They do not depend on the underlying $L_{2}(P_{0})$-differentiable
curve immediately. 
\end{itemize}

\section{Efficiency}

In this section we present the main results to the asymptotic efficiency
of the tests $\varphi_{1n}$ and $\varphi_{2n}$. For the readers
convenience we begin with the preliminary results, which are partially
known, see Pfanzagl and Wefelmayer (1982, 1985) and Strasser (1985).
The proof is a bit different as in the given references. 
\begin{defn}
The sequence $\phi_{n}$ of tests for $\mathcal{H}_{1}$ against $\mathcal{K}_{1}$
has the level $\alpha$ property asymptotically if $\lim_{n\rightarrow\infty}\int\phi_{n}dP_{t_{n}}^{n}=\alpha$
for all $(P_{t_{n}})_{n\in\mathbb{N}}\in\mathcal{H}_{2}.$ 

The sequence $\phi_{n}$ of tests for $\mathcal{H}_{2}$ against $\mathcal{K}_{2}$
is asymptotically unbiased if $\lim_{n\rightarrow\infty}\int\phi_{n}dP_{t_{n}}^{n}\geq\alpha$
for all $(P_{t_{n}})_{n\in\mathbb{N}}\in\mathcal{K}_{2}$ and $\lim_{n\rightarrow\infty}\int\phi_{n}dP_{t_{n}}^{n}\leq\alpha$
for all $(P_{t_{n}})_{n\in\mathbb{N}}\in\mathcal{H}_{2}$. \end{defn}
\begin{thm}
\label{one_two_sided_easy} Suppose that $T(P_{0},\mathcal{P})=K(P_{0},\mathcal{P}).$ 

(a) The sequence $\varphi_{1n}$ of tests is then asymptotically most
powerfull in the set of all sequences of tests for $\mathcal{H}_{1}$
against $\mathcal{K}_{1}$, which have level $\alpha$ property asymptotically. 

(b) The sequence $\varphi_{2n}$ of tests is then asymptotically most
powerfull in the set of all asymptotically unbiased sequences of tests
for $\mathcal{H}_{2}$ against $\mathcal{K}_{2}.$ 
\end{thm}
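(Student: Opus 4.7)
The plan is to reduce the two testing problems to a Gaussian shift limit experiment via Le Cam's third lemma / convergence of experiments, and then quote the classical one-sided and two-sided optimality results for Gaussian shifts. Since Theorem \ref{asymp_power_function} already computes the asymptotic power of $\varphi_{1n}$ and $\varphi_{2n}$ along every element of $\mathcal{F}$, all that remains is to show that these power functions are upper envelopes within the admissible classes of tests.

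First I would record the LAN expansion. For any $L_{2}(P_{0})$-differentiable curve $t\mapsto P_{t}$ with tangent $g$ and any bounded sequence $c_{n}\to c$, standard arguments (Strasser 1985, Chapter 12; Bickel et al.\ 1993) give
\begin{equation*}
\log\frac{dP_{c_{n}/\sqrt{n}}^{n}}{dP_{0}^{n}}=\frac{c_{n}}{\sqrt{n}}\sum_{i=1}^{n}g(X_{i})-\frac{c^{2}}{2}\|g\|^{2}+o_{P_{0}^{n}}(1),
\end{equation*}
so the localized models converge in the Le Cam sense to the Gaussian shift experiment indexed by $K(P_{0},\mathcal{P})$ with central sequence $Z(h)\sim N(0,\|h\|^{2})$ and shift parameter $h\in K$. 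For $(P_{t_{n}})\in\mathcal{F}$, writing $t_{n}=c_{n}/\sqrt{n}$ with $c_{n}\to c>0$, the differentiability of $k$ together with (\ref{diff_func}) yields $\sqrt{n}(k(P_{t_{n}})-k(P_{0}))\to c\int\widetilde{k}g\,dP_{0}=:\vartheta$, so the local parameter appearing in Theorem \ref{asymp_power_function} is precisely the value of the bounded linear functional $h\mapsto\int\widetilde{k}h\,dP_{0}$ on the limiting shift $h=cg\in K$.

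Next I would verify that the induced limit testing problem really is the full Gaussian shift problem on the real line. By hypothesis $T(P_{0},\mathcal{P})=K(P_{0},\mathcal{P})$, so $\widetilde{k}\in K$ is itself the tangent of some $L_{2}(P_{0})$-differentiable curve; by Lemma \ref{L2_diff_skalierung} every scalar multiple $c\widetilde{k}$ is also a tangent. Combined with (\ref{not_null}) this shows that the attainable values of $\vartheta$ along $\mathcal{F}$ fill all of $\mathbb{R}$, and that the projection of the central sequence onto $\widetilde{k}$ is a one-dimensional sufficient statistic in the limit with distribution $N(\vartheta,\|\widetilde{k}\|^{2})$. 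The implicit hypotheses $\mathcal{H}_{1},\mathcal{K}_{1}$ and $\mathcal{H}_{2},\mathcal{K}_{2}$ translate in the limit experiment to $\vartheta\le 0$ vs.\ $\vartheta>0$ and $\vartheta=0$ vs.\ $\vartheta\neq 0$, respectively.

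Finally I would invoke the asymptotic upper-bound theorem of Le Cam (Strasser 1985, \S 62 or Janssen 1999a): any level-$\alpha$ sequence $\phi_{n}$ (resp.\ any asymptotically unbiased sequence) has, along each $(P_{t_{n}})\in\mathcal{F}$, an asymptotic power bounded above by the power at $\vartheta$ of some level-$\alpha$ (resp.\ unbiased) test in the Gaussian shift limit. For the one-dimensional shift model $N(\vartheta,\|\widetilde{k}\|^{2})$ Neyman--Pearson gives the upper envelope $\Phi(\vartheta/\|\widetilde{k}\|-u_{1-\alpha})$, while for the two-sided problem the UMPU test is the two-sided Gauss test with power $\Phi(\vartheta/\|\widetilde{k}\|-u_{1-\alpha/2})+\Phi(-\vartheta/\|\widetilde{k}\|-u_{1-\alpha/2})$. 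Comparing with Theorem \ref{asymp_power_function} shows $\varphi_{1n}$ and $\varphi_{2n}$ attain these bounds, proving (a) and (b).

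The main obstacle is the second paragraph of Step~3, i.e.\ transporting the level condition and (for (b)) the unbiasedness condition from the sequences $\phi_{n}$ to a single randomized test in the Gaussian limit experiment while simultaneously preserving all the local alternatives. The equality $T=K$ is precisely what lets one avoid boundary issues: without it the implicit alternatives would only reach a closed cone in the tangent space and one would obtain merely a one-sided Gaussian shift on a half-space, for which the classical UMPU result in (b) would fail.
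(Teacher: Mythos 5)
Your overall route --- localize, pass to a Gaussian shift limit experiment, and quote the classical one-sided and two-sided optimality results there --- is the same as the paper's. The paper implements it concretely: it constructs the normal tangent master model $P_{g}=c(g)^{-1}\left(1+\frac{1}{2}g\right)^{2}P_{0}$ (Lemma \ref{konstruktion}), proves weak convergence of the experiments $E_{n}=(\Omega^{n},\mathcal{A}^{n},\{P_{n^{-1/2}g}:g\in T(P_{0},\mathcal{P})\})$ to a Gaussian shift on the whole tangent space, identifies an arbitrary implicit sequence $(P_{t_{n}})_{n\in\mathbb{N}}$ with the master-model parameter $tg$ via $|P_{t_{n}}^{n}-P_{s_{n}}^{n}|\rightarrow0$, and then cites Strasser's Lemmas 82.6 and 82.12, which are exactly the power bounds for testing the linear functional $h\mapsto\int h\widetilde{k}\,dP_{0}$ in an infinite-dimensional Gaussian shift under asymptotic level-$\alpha$, respectively unbiasedness, constraints.

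The genuine gap in your write-up is the reduction to ``the one-dimensional shift model''. The claim that the projection of the central sequence onto $\widetilde{k}$ is a sufficient statistic in the limit is false: the limit experiment is a Gaussian shift indexed by all of $T(P_{0},\mathcal{P})$, no one-dimensional statistic is sufficient for it, and the component of the shift orthogonal to $\widetilde{k}$ is a nuisance parameter. Hence you cannot directly apply the Neyman--Pearson envelope or the UMPU result for $N(\vartheta,\|\widetilde{k}\|^{2})$ to an arbitrary limiting test: with a level constraint only at $h=0$, the attainable one-sided power at a shift $h^{*}$ is $\Phi(\|h^{*}\|-u_{1-\alpha})$, which strictly exceeds the asserted bound $\Phi(\vartheta/\|\widetilde{k}\|-u_{1-\alpha})$ whenever $h^{*}$ is not proportional to $\widetilde{k}$. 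What makes the correct bound work is that, because $T(P_{0},\mathcal{P})=K(P_{0},\mathcal{P})$, every $h$ in the null hyperplane $N(\widetilde{k},P_{0},\mathcal{P})$ is the tangent of a curve in $\mathcal{P}$, so the limiting test inherits the level (resp.\ unbiasedness) constraint on the whole hyperplane; the one-sided bound then follows from Neyman--Pearson between the alternative $h^{*}$ and its projection $h^{*}-(\vartheta/\|\widetilde{k}\|^{2})\widetilde{k}$ onto that hyperplane, and the two-sided bound from the corresponding similar/unbiased-test argument with nuisance parameters --- precisely the content of Strasser's Lemmas 82.6 and 82.12. You yourself flag this transport of the constraints to the limit experiment as ``the main obstacle'' but leave it unresolved; the paper's master-model construction together with the ULAN/variational-distance identification is exactly how it is resolved, so your proposal is incomplete at the decisive step.
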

The short but comprehensible proof of theorem \ref{one_two_sided_easy}
is given in section \ref{proofs}. Further we need the following Definition
for the conveniently notation.
\begin{defn}
 The set $N(\widetilde{k},P_{0},\mathcal{P}):=\{h\in T(P_{0},\mathcal{P}):\int h\widetilde{k}\, dP_{0}=0\}$
is the orthogonal complement of the canonical gradient $\widetilde{k}$
in the tangential space $T(P_{0},\mathcal{P}).$ 
\end{defn}
Thus we introduce the main results. The assumption $T(P_{0},\mathcal{P})=K(P_{0},\mathcal{P})$
of theorem \ref{one_two_sided_easy} can be relaxed for the one-sided
and for the two-sided tests. The proof of the following two theorems
use an asymptotic representation theorem of van der Vaart (1991).
The following theorem is the main result for the one-sided tests $\varphi_{1n}$
for $\mathcal{H}_{1}$ against $\mathcal{K}_{1}.$ 
\begin{thm}
\label{result_1} Suppose, that the set $K(P_{0},\mathcal{P})\cap N(\widetilde{k},P_{0},\mathcal{P})$
is dense in $N(\widetilde{k},P_{0},\mathcal{P})$ with respect to
$L_{2}(P_{0})$-norm. The sequence $\varphi_{1n}$ of tests is then
asymptotically most powerfull in the set of all sequences of tests
for $\mathcal{H}_{1}$ against $\mathcal{K}_{1}$, which have level
$\alpha$ property asymptotically. 
\end{thm}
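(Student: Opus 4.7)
The plan is to reduce the statement to a one-sided Gauss-test bound in a Gaussian shift limit experiment, invoking Le Cam's weak convergence theory together with the asymptotic representation theorem of van der Vaart (1991) as the transfer principle from the $n$-sample models to the limit.

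First I would fix an arbitrary competing sequence $\psi_n$ with asymptotic level $\alpha$ on $\mathcal{H}_1$ and an implicit alternative $(P_{t_n})\in\mathcal{K}_1$ with $\vartheta:=\lim_n\sqrt{n}(k(P_{t_n})-k(P_0))>0$, coming from a curve with tangent $g_0\in K(P_0,\mathcal{P})$; the goal is to show $\limsup_n\int\psi_n\,dP_{t_n}^n\leq\Phi(\vartheta/\|\widetilde{k}\|-u_{1-\alpha})$, which by Theorem~\ref{asymp_power_function} is the limit of $\int\varphi_{1n}\,dP_{t_n}^n$. By the density hypothesis and separability of $L_2(P_0)$ I would choose a countable set $D\subset K(P_0,\mathcal{P})\cap N(\widetilde{k},P_0,\mathcal{P})$ dense in $N(\widetilde{k},P_0,\mathcal{P})$, and, using Lemma~\ref{L2_diff_skalierung}, form the local experiments $(P_{h/\sqrt{n}}^n)$ indexed by the rational linear combinations of $\{g_0\}\cup D$. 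These form a LAN family converging weakly to a Gaussian shift experiment with covariance kernel $\langle\cdot,\cdot\rangle_{L_2(P_0)}$, and by differentiability of $k$ the hypothesis and alternative are classified in the limit by the sign of $\langle h,\widetilde{k}\rangle_{L_2(P_0)}$, since $\sqrt{n}(k(P_{h/\sqrt{n}})-k(P_0))\to\langle h,\widetilde{k}\rangle_{L_2(P_0)}$.

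Van der Vaart's representation theorem then yields, along a subsequence, a measurable test $\psi(Z)$ on the Gaussian limit with $\int\psi_n\,dP_{h/\sqrt{n}}^n\to E_h[\psi(Z)]$ for every $h$ in the chosen indexing family. Every multiple of a $g\in D$ generates an implicit hypothesis, so $E_h[\psi(Z)]\leq\alpha$ on $\mathbb{R}\cdot D$; continuity of the Gaussian power function in the shift extends the bound to all $h\in N(\widetilde{k},P_0,\mathcal{P})$. With level $\alpha$ established on a full linear subspace orthogonal to $\widetilde{k}$, the limit problem reduces by translation invariance (equivalently, by sufficiency of the projection onto $\widetilde{k}$) to a one-dimensional one-sided Gauss test on the statistic $\langle Z,\widetilde{k}\rangle/\|\widetilde{k}\|$, and the classical Neyman--Pearson bound at the shift $h_0$ associated to $(P_{t_n})$ (which satisfies $\langle h_0,\widetilde{k}\rangle=\vartheta$) is exactly $\Phi(\vartheta/\|\widetilde{k}\|-u_{1-\alpha})$.

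The hard part is the extension of the level-$\alpha$ constraint from the countable set $D$ of bona fide tangents to the full $L_2(P_0)$-closure $N(\widetilde{k},P_0,\mathcal{P})$ inside the Gaussian limit; this continuity is precisely the technical role played by the density hypothesis and is the reason it is the correct weakening of the condition $T(P_0,\mathcal{P})=K(P_0,\mathcal{P})$ used in Theorem~\ref{one_two_sided_easy}. One further has to be careful to fix the countable indexing family $\{g_0\}\cup D$ \emph{before} extracting the subsequential van der Vaart limit, so that all finitely many directions needed for the final Neyman--Pearson reduction are handled by a single limit test $\psi$; this is essentially a standard diagonal argument once the separability of $L_2(P_0)$ is exploited.
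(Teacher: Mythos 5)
First, a point of comparison you could not have known: the paper never actually writes out a proof of Theorem \ref{result_1}. The proofs section ends with the proof of Theorem \ref{one_two_sided_easy}, and Remark \ref{konvex_fertig} breaks off mid-sentence; the text only announces that the proof ``uses an asymptotic representation theorem of van der Vaart (1991)''. Your proposal follows exactly that announced strategy, and its core is sound: fix the countable index set before passing to a subsequential van der Vaart limit in the Gaussian shift; impose the asymptotic level constraint only along genuine implicit hypotheses, i.e.\ along multiples of tangents in $K(P_{0},\mathcal{P})\cap N(\widetilde{k},P_{0},\mathcal{P})$; use the density hypothesis together with continuity of Gaussian power functions in the shift (total-variation continuity) to push the constraint to the closure $N(\widetilde{k},P_{0},\mathcal{P})$, in particular to the projection $h_{0}^{\perp}=h_{0}-\vartheta\widetilde{k}/\|\widetilde{k}\|^{2}$ of the alternative shift $h_{0}$; and then bound the power at $h_{0}$. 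For that last step I would not argue via ``translation invariance'' or sufficiency of the projection onto $\widetilde{k}$ (similarity on $N$ does not by itself force the test to depend only on the $\widetilde{k}$-component); the clean route is a two-point Neyman--Pearson comparison of $h_{0}^{\perp}$ against $h_{0}$, whose likelihood ratio is monotone in the Gaussian coordinate along $h_{0}-h_{0}^{\perp}\propto\widetilde{k}$ and which gives precisely $\Phi(\|h_{0}-h_{0}^{\perp}\|-u_{1-\alpha})=\Phi(\vartheta/\|\widetilde{k}\|-u_{1-\alpha})$, matching Theorem \ref{asymp_power_function}. This also makes transparent why the density hypothesis is the right weakening of $T(P_{0},\mathcal{P})=K(P_{0},\mathcal{P})$: all you ever need is the level constraint at the single point $h_{0}^{\perp}$, reachable by approximation from $K\cap N$.

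Two steps of your write-up need repair, though both are fixable. First, Lemma \ref{L2_diff_skalierung} only makes $K(P_{0},\mathcal{P})$ stable under scalar multiplication; it is a cone, not a vector space, so ``rational linear combinations of $\{g_{0}\}\cup D$'' need not be tangents and the measures $P_{h/\sqrt{n}}$ are simply undefined for such $h$ as elements of curves in $\mathcal{P}$. Either restrict the index set to $\mathbb{R}\cdot(\{g_{0}\}\cup D)$, which suffices since the limit test lives on the Gaussian shift over the closed linear span and $h_{0}^{\perp}$ lies in that span, or index the local experiments by the whole tangent space via the master-model construction of Lemma \ref{konstruktion}, as the paper does for Theorem \ref{one_two_sided_easy}. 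Second, your argument silently identifies the power along the actual sequences $(P_{t_{n}}^{n})$ (both the alternative and the implicit hypotheses carrying the level constraint) with the power along the rescaled or master-model sequences $P_{n,tg}$; this requires the contiguity/ULAN step, i.e.\ $|P_{t_{n}}^{n}-P_{s_{n}}^{n}|\rightarrow0$ with $s_{n}=n^{-1/2}t$ and the equality of limiting likelihood ratios, exactly as in (\ref{ulan_eq}). Without that bridge, the limits produced by the representation theorem refer to the wrong measures. With these two adjustments your outline is a correct and complete proof, and indeed supplies an argument the paper itself omits.
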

Theorem \ref{result_1} generalizes theorem 3.1 and corollary 3.3
of Janssen (1999a), since the main assumption of Janssen (1999a) is
quite stronger as the assumption of theorem \ref{result_1}. This
will be discussed in remark \ref{konvex_fertig} below. The proof
of theorem \ref{result_1} differs explicitly from the proofs in Janssen
(1999a). The theorem \ref{result_2} below is the main result for
the two-sided tests $\varphi_{2n}$ for $\mathcal{H}_{2}$ against
$\mathcal{K}_{2}.$ 
\begin{thm}
\label{result_2} Suppose, that the set $K(P_{0},\mathcal{P})\cap N(\widetilde{k},P_{0},\mathcal{P})$
is dense in $N(\widetilde{k},P_{0},\mathcal{P})$ and $K(P_{0},\mathcal{P})$
is dense in $T(P_{0},\mathcal{P})$ with respect to $L_{2}(P_{0})$-norm.
The sequence $\varphi_{2n}$ of tests is then asymptotically most
powerfull in the set of all asymptotically unbiased sequences of tests
for $\mathcal{H}_{2}$ against $\mathcal{K}_{2}.$ 
\end{thm}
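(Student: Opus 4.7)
The plan is to follow the Le Cam--van der Vaart limit experiment framework used for Theorem \ref{result_1}, adapting it to the unbiased setting. First, I would localize: by Lemma \ref{L2_diff_skalierung} and the $L_{2}(P_{0})$-differentiability of the underlying curves, any sequence $(P_{t_{n}})_{n\in\mathbb{N}}\in\mathcal{F}$ with tangent $g\in K(P_{0},\mathcal{P})$ and scaling $s=\lim_{n\to\infty}\sqrt{n}\,t_{n}\in(0,\infty)$ satisfies LAN, and the product sequence $P_{t_{n}}^{n}$ is a local perturbation, in the Le Cam sense, of the Gaussian shift experiment $\mathcal{E}=(Q_{h})_{h\in T(P_{0},\mathcal{P})}$ on the Hilbert space $T(P_{0},\mathcal{P})$, with shift $h=sg$ and local parameter $\vartheta=\langle h,\widetilde{k}\rangle_{P_{0}}$. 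The degenerate case $s=+\infty$ gives $\vartheta=\pm\infty$ and asymptotic power $1$, so the bound is trivial there.

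Second, I would take any asymptotically unbiased level $\alpha$ sequence $\phi_{n}$ and apply van der Vaart's (1991) asymptotic representation theorem: every subsequence of $(\phi_{n})$ admits a further subsequence along which $\phi_{n}$ converges to a randomized test $\phi$ on $\mathcal{E}$, with $\lim_{n}E_{P_{t_{n}}^{n}}\phi_{n}=\widetilde{E}_{h}\phi$ for each realized shift $h$. Passing to a countable dense subset of $T(P_{0},\mathcal{P})$ and diagonalizing yields a single limit $\phi$ such that $\liminf_{n}E_{P_{t_{n}}^{n}}\phi_{n}\le\widetilde{E}_{h}\phi$ along every convergent local sequence.

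Third, I would transfer the two constraints to $\phi$. For the level, the first density hypothesis produces, for each $h\in N(\widetilde{k},P_{0},\mathcal{P})$, tangents $g_{m}\in K(P_{0},\mathcal{P})\cap N(\widetilde{k},P_{0},\mathcal{P})$ with $g_{m}\to h$; each $g_{m}$ gives an implicit null sequence in $\mathcal{H}_{2}$, so $\widetilde{E}_{g_{m}}\phi\le\alpha$, and weak continuity of $h\mapsto Q_{h}$ yields $\widetilde{E}_{h}\phi\le\alpha$. For unbiasedness, the second density hypothesis produces, for each $h\in T(P_{0},\mathcal{P})$ with $\langle h,\widetilde{k}\rangle_{P_{0}}\ne 0$, tangents $g_{m}\in K(P_{0},\mathcal{P})$ with $g_{m}\to h$ and eventually $\langle g_{m},\widetilde{k}\rangle_{P_{0}}\ne 0$; these generate implicit alternatives in $\mathcal{K}_{2}$, forcing $\widetilde{E}_{g_{m}}\phi\ge\alpha$ and hence $\widetilde{E}_{h}\phi\ge\alpha$. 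Thus $\phi$ is a level $\alpha$ unbiased test on $\mathcal{E}$ for $\{\langle h,\widetilde{k}\rangle_{P_{0}}=0\}$ versus $\{\langle h,\widetilde{k}\rangle_{P_{0}}\ne 0\}$.

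Fourth, I would invoke the classical UMP unbiasedness in the Gaussian shift experiment. Decomposing $h=\lambda\widetilde{k}/\|\widetilde{k}\|+h^{\perp}$, the component of the canonical observable orthogonal to $\widetilde{k}$ is a complete sufficient statistic for the nuisance direction $h^{\perp}$; conditioning reduces the problem to a one-dimensional Gaussian location model in $\lambda$, in which the two-sided Gauss test is UMP unbiased. Hence $\widetilde{E}_{h}\phi$ is bounded above by the right-hand side of (\ref{two_sided_power}) with $\vartheta=\langle h,\widetilde{k}\rangle_{P_{0}}$, and Theorem \ref{asymp_power_function} shows that $\varphi_{2n}$ attains exactly this bound. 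The main obstacle I anticipate is extracting one common limit $\phi$ that simultaneously witnesses both the null-side and alternative-side density approximations; handling the unbiasedness constraint (rather than the one-sided level constraint of Theorem \ref{result_1}) is precisely what necessitates the second density hypothesis and a careful diagonal subsequence extraction together with the weak continuity of $h\mapsto Q_{h}$.
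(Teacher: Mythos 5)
Your proposal is sound and follows essentially the route the paper itself intends: the paper never actually prints a proof of Theorem \ref{result_2} (the proofs section ends with Theorem \ref{one_two_sided_easy}, and Remark \ref{konvex_fertig} breaks off mid-sentence), but it announces exactly your strategy --- van der Vaart's (1991) asymptotic representation theorem applied to the Gaussian-shift limit of the experiments $E_{n}$ established in the lemmas of the proofs section, transfer of the level and unbiasedness constraints from tangents realized in $K(P_{0},\mathcal{P})$ to all of $N(\widetilde{k},P_{0},\mathcal{P})$ resp.\ $T(P_{0},\mathcal{P})$ via the two density hypotheses and continuity of Gaussian power functions, and then the UMPU property of the two-sided Gauss test in the Gaussian shift (Strasser, Lemma 82.12, exactly as invoked for part (b) of Theorem \ref{one_two_sided_easy}), with attainment of the bound by $\varphi_{2n}$ coming from Theorem \ref{asymp_power_function}. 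The soft spots in your sketch --- the degenerate case $\lim_{n}\sqrt{n}\,t_{n}=\infty$ and the diagonal extraction over a countable set of parameter points --- are glossed over by the paper as well and do not affect the substance of the argument.
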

The proofs of theorems \ref{result_1} and \ref{result_2} are given
in section \ref{proofs}. 
\begin{rem}
\label{konvex_fertig} We show, that the assumption of theorems \ref{result_1}
and \ref{result_2} are already satisfied if $K(P_{0},\mathcal{P})$
is a vector space. The set $K(P_{0},\mathcal{P})$ is a vector space
iff it is convex since $K(P_{0},\mathcal{P})$ is a cone. If $K(P_{0},\mathcal{P})$
is a vector space then it is obviously dense in tangential space $T(P_{0},\mathcal{P})$,
see definition of $T(P_{0},\mathcal{P})$. The set $W:=K(P_{0},\mathcal{P})\cap N(\widetilde{k},P_{0},\mathcal{P})$
is then a vector space. Let $h\in N(\widetilde{k},P_{0},\mathcal{P})$
be fixed. There exists some sequence $(h_{n})_{n\in\mathbb{N}}$ in
$K(P_{0},\mathcal{P})$ such that.
\end{rem}

\section{Proofs}
\begin{proof}
\emph{of Lemma \ref{L2_diff_skalierung}}

The case $a=0$ is trivial. Let $a\neq0$. The condition (\ref{l2_diff2})
is fulfilled since 
\[
\lim_{t\rightarrow0}\frac{1}{t^{2}}P_{at}\left(\left\{ \frac{dP_{at}}{dP_{0}}=\infty\right\} \right)=a^{2}\lim_{t\rightarrow0}\frac{1}{t^{2}}P_{t}\left(\left\{ \frac{dP_{t}}{dP_{0}}=\infty\right\} \right)=0.
\]
The condition (\ref{l2_diff1}) is satisfied because 
\begin{eqnarray*}
\textrm{} &  & \lim_{t\rightarrow0}\left\Vert \frac{2}{t}\left(\left(\frac{dP_{at}}{dP_{0}}\right)^{\frac{1}{2}}-1\right)-ag\right\Vert _{L_{2}(P_{0})}\\
 & = & |a|\lim_{s\rightarrow0}\left\Vert \frac{2}{s}\left(\left(\frac{dP_{s}}{dP_{0}}\right)^{\frac{1}{2}}-1\right)-ag\right\Vert _{L_{2}(P_{0})}=0.
\end{eqnarray*}

\end{proof}

\begin{proof}
Lemma \ref{top_equi} The equivalence between (b) and (c) is easy
to show.\\
 $(a)\Rightarrow(b)$ Supposed the statement (b) is wrong. Then there
exists a sequence $P_{n}\in\mathcal{P}$ such that $\lim_{n\rightarrow\infty}d(P_{n},P_{0})=0$
and $\lim_{n\rightarrow\infty}E_{P_{n}}(h^{2})=\infty.$ That implies
$\limsup_{n\rightarrow\infty}E_{P_{n}}(h^{2})=\infty.$ This is contradiction
to $(a).$ \\
 $(b)\Rightarrow(a)$ Let $P_{n}\in\mathcal{P}$ be a sequence in
$\mathcal{P}$ with $\lim_{n\rightarrow0}d(P_{n},P_{0})=0.$ For any
$\varepsilon>0$ there exists $n_{0}\in\mathbb{N}$ such that $d(P_{n},P_{0})<\varepsilon$
for all $n\geq n_{0}$. Consequently we have 
\[
\limsup_{n\rightarrow\infty}E_{P_{n}}(h^{2})<\sup\{E_{P}(h^{2}):P\in\mathcal{P}\mbox{ with }d(P,P_{0})<\varepsilon\}<\infty.
\]

\end{proof}

\begin{proof}
\emph{of Theorem \ref{asymp_power_function}} Let $(P_{t_{n}})_{n\in\mathbb{N}}\in\mathcal{F}$
and let $f:t\mapsto P_{t}$ be an associated $L_{2}(P_{0})$-differentiable
curve in $\mathcal{P}$ with tangent $g\in L_{2}(P_{0})$ such that
$f(t_{n})=P_{t_{n}}$ for all $n\in\mathbb{N}$. At first we calculate
a local parameter 
\begin{equation}
\vartheta=\lim_{n\rightarrow\infty}\sqrt{n}(k(P_{t_{n}})-k(P_{0}))=t\int g\widetilde{k}\, dP_{0},\label{local_parameter}
\end{equation}
where $t:=\lim_{n\rightarrow\infty}n^{\frac{1}{2}}t_{n}.$ Assume,
that $\int g^{2}\, dP_{0}=0$. By (\ref{local_parameter}) we obtain
$\vartheta=0.$ Let $|P-Q|$ denote the variational distance between
probability measures $P$ and $Q$ on the same probability space,
see Strasser(1985), p. 5. It is well known, that $\lim_{n\rightarrow\infty}|P_{t_{n}}^{n}-P_{0}^{n}|=0$,
see Strasser(1985), p.386, theorem 75.8. In this case we have 
\[
\left|\int\varphi_{in}dP_{t_{n}}^{n}-\int\varphi_{in}dP_{0}^{n}\right|\leq2|P_{t_{n}}^{n}-P_{0}^{n}|\rightarrow0
\]
for $i=1,2$. This implies 
\[
\lim_{n\rightarrow\infty}\int\varphi_{in}dP_{t_{n}}^{n}=\lim_{n\rightarrow\infty}\int\varphi_{in}dP_{0}^{n}=\alpha
\]
for $i=1,2$, where the last equality follows from central limit theorem.
Thus, the statement of the theorem is proved for the case $\int g^{2}\, dP_{0}=0$.\\
 Further we assume $\int g^{2}dP_{0}\neq0.$ The sequence $(P_{t_{n}})_{n\in\mathbb{N}}$
of probability measures is local asymptotic normal with central sequence
$\sum_{i=1}^{n}t_{i}g(X_{i})$, see Le Cam and Yang(2000), p. 177,
section 7.2. The central limit theorem implies 
\[
\mathcal{L}\left(\left.\left(\frac{1}{\sqrt{n}}\sum_{i=1}^{n}\widetilde{k}(X_{i}),\sum_{i=1}^{n}t_{i}g(X_{i})\right)\right|P_{0}^{n}\right)\rightarrow N\left(\left(\begin{array}{c}
0\\
0
\end{array}\right),\left(\begin{array}{cc}
\sigma_{1}^{2} & \sigma_{12}\\
\sigma_{12} & \sigma_{2}^{2}
\end{array}\right)\right),
\]
where $\sigma_{1}^{2}=\left\Vert \widetilde{k}\right\Vert ^{2},$
$\sigma_{12}=\vartheta$ and $\sigma_{2}^{2}=t^{2}\int g^{2}dP_{0}.$
Le Cam's third lemma now implies 
\[
\mathcal{L}\left(\left.\frac{1}{\sqrt{n}}\sum_{i=1}^{n}\widetilde{k}(X_{i})\right|P_{t_{n}}^{n}\right)\rightarrow N\left(\sigma_{12},\sigma_{2}^{2}\right)=N\left(\vartheta,\|\widetilde{k}\|^{2}\right),
\]
see Hájek et. al (1999), p. 257. Hence we obtain 
\[
\lim_{n\rightarrow\infty}\int\varphi_{1n}dP_{t_{n}}=1-\Phi\left(\frac{u_{1-\alpha}\|\widetilde{k}\|-\vartheta}{\|\widetilde{k}\|}\right)=\Phi\left(\frac{\vartheta}{\|\widetilde{k}\|}-u_{1-\alpha}\right),
\]
\[
\lim_{n\rightarrow\infty}\int\varphi_{2n}dP_{t_{n}}=1-\Phi\left(\frac{u_{1-\frac{\alpha}{2}}\|\widetilde{k}\|-\vartheta}{\|\widetilde{k}\|}\right)+\Phi\left(-\frac{u_{1-\frac{\alpha}{2}}\|\widetilde{k}\|-\vartheta}{\|\widetilde{k}\|}\right).
\]

\end{proof}
The proof of theorem \ref{one_two_sided_easy} needs some preparations.
For the application of the theory of the Gaussian shift experiments
we need the explicit assignment between the probability measures from
$\mathcal{P}$ and tangents from $T(P_{0},\mathcal{P})$. Hence we
construct a semiparametric family of probability measures which parameter
space is equal to $T(P_{0},\mathcal{P})$. The following construction
of the $L_{2}(P_{0})$-differentiable curves corresponds to the normal
tangent master model of Janssen(2004). 
\begin{lem}
\label{konstruktion} The function $f_{g}:=c(g)^{-1}\left(1+\frac{1}{2}g\right)^{2}$
with $c(g):=\int\left(1+\frac{1}{2}g\right)^{2}dP_{0}$ is then a
density of the probability measures $P_{g}:=f_{g}P_{0}$ for all $g\in L_{2}^{(0)}(P_{0})$.
Moreover for each $g\in L_{2}^{(0)}(P_{0})$ the curve $\mathbb{R}\rightarrow\mathcal{M}_{1}(\Omega,\mathcal{A}),t\mapsto P_{tg}$
is $L_{2}(P_{0})$-differentiable with tangent $g.$ \end{lem}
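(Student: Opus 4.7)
The plan is to prove the two claims separately. For the density assertion, $f_g$ is manifestly nonnegative, and using $g\in L_2^{(0)}(P_0)$ (so that $\int g\,dP_0=0$), a direct computation yields
\begin{equation*}
c(g)=\int\left(1+\frac{g}{2}\right)^{2}dP_0 = 1 + \frac{1}{4}\|g\|_{L_2(P_0)}^{2} > 0,
\end{equation*}
which shows that $f_g$ is well defined and $\int f_g\,dP_0 = 1$, so $P_g=f_g P_0$ is a probability measure.

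For the differentiability claim, I would verify the two defining conditions (\ref{l2_diff1}) and (\ref{l2_diff2}) for the curve $t\mapsto P_{tg}$ at $t=0$. Condition (\ref{l2_diff2}) is immediate: since $g$ is $P_0$-a.s.\ finite, the density $f_{tg}$ is $P_0$-a.s.\ finite, so the set $\{f_{tg}=\infty\}$ is $P_0$-null and hence $P_{tg}$-null.

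Condition (\ref{l2_diff1}) is the core step. Substituting $tg$ for $g$ above gives $c(tg)=1+(t^{2}/4)\|g\|^{2}$, hence $c(tg)^{-1/2}\to 1$ and $(2/t)(c(tg)^{-1/2}-1)=O(t)$ as $t\to 0$. Since $\sqrt{f_{tg}}=c(tg)^{-1/2}\,|1+tg/2|$, I would exploit the decomposition
\begin{equation*}
\frac{2}{t}\bigl(\sqrt{f_{tg}}-1\bigr) = c(tg)^{-1/2}\cdot\frac{2}{t}\bigl(|1+tg/2|-1\bigr) + \frac{2}{t}\bigl(c(tg)^{-1/2}-1\bigr).
\end{equation*}
Setting $B_t:=\{x:1+tg(x)/2<0\}\subset\{|g|>2/|t|\}$, on the complement $B_t^{c}$ the inner factor $(2/t)(|1+tg/2|-1)$ equals $g$ exactly, while on $B_t$ the reverse triangle inequality gives the pointwise bound $|(2/t)(|1+tg/2|-1)|\leq|g|$. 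Since $P_0(B_t)\to 0$ and $|g|\in L_2(P_0)$ provides a square-integrable majorant, dominated convergence combined with $c(tg)^{-1/2}\to 1$ yields $\|(2/t)(\sqrt{f_{tg}}-1)-g\|_{L_2(P_0)}\to 0$, and this is exactly (\ref{l2_diff1}) with tangent $g$.

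The principal technical obstacle is tracking the absolute value $|1+tg/2|$ on the sign-change set $B_t$, but this is absorbed using that $B_t$ has $P_0$-measure tending to $0$ as $|t|\to 0$ while $|g|$ serves as a square-integrable dominating function.
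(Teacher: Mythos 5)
Your proposal is correct and follows essentially the same route as the paper: compute $c(tg)=1+\tfrac{t^{2}}{4}\|g\|_{L_{2}(P_{0})}^{2}$, dispose of (\ref{l2_diff2}) via $P_{tg}\ll P_{0}$, and verify (\ref{l2_diff1}) by writing $\sqrt{f_{tg}}=c(tg)^{-1/2}\,|1+\tfrac{t}{2}g|$, handling the inner factor by dominated convergence with majorant $|g|$ and the normalization via $(c(tg)^{-1/2}-1)/t\rightarrow0$. Your explicit treatment of the sign-change set $B_{t}$ just makes precise the paper's almost-everywhere convergence step, so the arguments coincide.
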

\begin{proof}
The function $f_{g}$ is well-defined for all $g\in L_{2}^{(0)}(P_{0})$
since 
\begin{equation}
c(g):=\int\left(1+\frac{1}{2}g\right)^{2}dP_{0}=1+\frac{t^{2}}{4}\int g^{2}dP_{0}\geq1.\label{c(g)}
\end{equation}
The function $f_{g}$ is also the $P_{0}$-density of some probability
measures $P_{g}:=f_{g}P_{0}.$ We show now that the curve $\mathbb{R}\rightarrow\mathcal{M}_{1}(\Omega,\mathcal{A}),t\mapsto P_{tg}$
is differentiable with tangent $g.$ The condition \ref{l2_diff2}
is fulfilled because of $P_{tg}\ll P_{0}.$ The condition \ref{l2_diff1}
remains to prove. We obtain at first $\left.\frac{d}{dt}c(tg)\right|_{t=0}=0$
and $\frac{2}{t}\left(|1+\frac{1}{2}tg|-1\right)\rightarrow g$ almost
everywhere under $P_{0}$ for $t\rightarrow0.$ By the low of the
dominated convergence we have 
\begin{equation}
\lim_{t\rightarrow0}\left\Vert \frac{2}{t}\left(\left|1+\frac{1}{2}tg\right|-1\right)-g\right\Vert _{L_{2}(P_{0})}=0\label{hin1}
\end{equation}
since $\left|\frac{2}{t}\left(\left|1+\frac{1}{2}tg\right|-1\right)\right|\leq\frac{2}{|t|}\left|1+\frac{1}{2}tg-1\right|=|g|.$
In addition we obtain 
\begin{equation}
\lim_{t\rightarrow0}\frac{c(tg)^{-\frac{1}{2}}-1}{t}=\left.\frac{d}{dt}c(tg)^{-\frac{1}{2}}\right|_{t=0}=\left.-\frac{1}{2}c(tg)^{-\frac{3}{2}}\frac{d}{dt}c(tg)\right|_{t=0}=0.\label{hin2}
\end{equation}
By (\ref{hin1}) and (\ref{hin2}) together we conclude 
\begin{eqnarray*}
 &  & \left\Vert \frac{2}{t}\left(\left(\frac{dP_{tg}}{dP_{0}}\right)^{\frac{1}{2}}-1\right)-g\right\Vert _{L_{2}\left(P_{0}\right)}\\
 & = & \left\Vert \frac{2}{t}\left(\left|1+\frac{1}{2}tg\right|c(tg)^{-\frac{1}{2}}-1\right)-g\right\Vert _{L_{2}\left(P_{0}\right)}\\
 & = & \left\Vert c(tg)^{-\frac{1}{2}}\left(\frac{2}{t}\left(\left|1+\frac{t}{2}g\right|-1\right)-g\right)+\frac{2}{t}\left(c(tg)^{-\frac{1}{2}}-1\right)\right.\\
 &  & \left.+g\left(c(tg)^{-\frac{1}{2}}-1\right)\right\Vert _{L_{2}\left(P_{0}\right)}\\
\textrm{} & \leq & c(tg)^{-\frac{1}{2}}\left\Vert \frac{2}{t}\left(\left|1+\frac{1}{2}tg\right|-1\right)-g\right\Vert _{L_{2}\left(P_{0}\right)}+\left|\frac{2}{t}\left(c(tg)^{-\frac{1}{2}}-1\right)\right|\\
 &  & +\left\Vert g\right\Vert _{L_{2}\left(P_{0}\right)}\left|c(tg)^{-\frac{1}{2}}-1\right|\\
 & \rightarrow & 0\,\mbox{ for }\, t\rightarrow0.
\end{eqnarray*}
This statement concludes the proof. 
\end{proof}
Define $P_{n,g}:=P_{n^{-\frac{1}{2}}g}$ for all $g\in T(P_{0},\mathcal{P})$
and $n\in\mathbb{N}.$ Recall that $T(P_{0},\mathcal{P})$ is a Hilbert
space with scalar product $(h,g)\mapsto\int hg\, dP_{0}.$ We consider
the sequence $E_{n}:=(\Omega^{n},\mathcal{A}^{n},\left\{ P_{n,g}:g\in T(P_{0},\mathcal{P})\right\} )$
of the statistics experiments.
\begin{lem}
The sequence $E_{n}$ converges weakly to a Gaussian shift on $H.$ \end{lem}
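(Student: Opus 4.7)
The plan is to invoke Le Cam's characterization of weak convergence to a Gaussian shift: it suffices to establish joint local asymptotic normality at $P_0$ with central process whose finite-dimensional distributions under $P_0^n$ converge to a centered Gaussian with covariance equal to the inner product of $H := T(P_0,\mathcal{P})$. Since the parameter set $H$ is a Hilbert space, the limiting experiment is then canonically identified as the Gaussian shift $\{N_g : g \in H\}$ with log-likelihood $L(g) - \tfrac{1}{2}\|g\|^2$ for the isonormal process $L$ on $H$.

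First I would fix $g \in H$ and apply Lemma \ref{konstruktion} to obtain an $L_2(P_0)$-differentiable curve $t \mapsto P_{tg}$ at $t=0$ with tangent $g$. The standard LAN expansion associated with any $L_2(P_0)$-differentiable curve (see Strasser (1985), Chapter 12, or Le Cam and Yang (2000), Section 6.2) yields, after substituting $t = n^{-1/2}$,
\[
\log \frac{dP_{n,g}^n}{dP_0^n}(X_1,\ldots,X_n) = \frac{1}{\sqrt{n}}\sum_{i=1}^n g(X_i) - \frac{1}{2}\int g^2\,dP_0 + R_n(g),
\]
with $R_n(g) \to 0$ in $P_0^n$-probability. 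The remainder estimate uses precisely conditions (\ref{l2_diff1}) and (\ref{l2_diff2}); condition (\ref{l2_diff2}) is automatic here since $P_{tg} \ll P_0$ by the construction in Lemma \ref{konstruktion}.

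Next I would pass to joint convergence. For any finite tuple $g_1,\ldots,g_k \in H$, the multivariate central limit theorem applied under $P_0^n$ gives
\[
\left(\frac{1}{\sqrt{n}}\sum_{i=1}^n g_j(X_i)\right)_{j=1}^k \ \Rightarrow\ N\!\left(0,\,\bigl(\textstyle\int g_i g_j\,dP_0\bigr)_{i,j=1}^k\right).
\]
Combined with the expansion above, the finite-dimensional marginals of the log-likelihood process $(g \mapsto \log dP_{n,g}^n/dP_0^n)$ converge to those of the Gaussian shift on $H$. By the standard convergence-of-experiments criterion (convergence of likelihood processes in the sense of finite-dimensional distributions, which implies weak convergence of experiments in Le Cam's sense), $E_n$ converges weakly to the Gaussian shift on $H$.

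The main technical step is the $o_{P_0^n}(1)$ control of $R_n(g)$; all else is the classical translation between $L_2(P_0)$-differentiability, LAN, and the Gaussian shift limit. Since this remainder estimate is a textbook consequence of (\ref{l2_diff1})--(\ref{l2_diff2}) and Lemma \ref{konstruktion} delivers both, the proof reduces to citing the standard LAN machinery and the finite-dimensional CLT.
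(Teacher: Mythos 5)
Your proof is correct and follows essentially the same route as the paper: both rest on the LAN expansion $\log\left(dP_{n,g}/dP_{n,0}\right)=\frac{1}{\sqrt{n}}\sum_{i=1}^{n}g(X_{i})-\frac{1}{2}\int g^{2}\,dP_{0}+o_{P_{n,0}}(1)$ taken from Le Cam and Yang (2000), combined with a standard criterion for weak convergence to a Gaussian shift. The only cosmetic difference is which criterion is invoked: the paper checks the hypotheses of Strasser (1985), Theorem 80.2 (one-dimensional asymptotic normality of the centered log-likelihoods $L_{n}(g)$ together with their asymptotic linearity in $g$, which is immediate since $L_{n}$ is linear up to the remainders), whereas you establish finite-dimensional convergence of the likelihood process via the multivariate CLT and cite the fidi-convergence characterization of weak convergence of experiments; the two routes are interchangeable here.
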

\begin{proof}
We use Theorem 80.2 of Strasser(1985), p.409 for the proof. From Le
Cam and Yang(2000), p. 177, section 7.2 we obtain the expansion 
\begin{equation}
\log\left(\frac{P_{n,g}}{P_{n,0}}\right)=\frac{1}{\sqrt{n}}\sum_{i=1}^{n}g(X_{i})-\frac{1}{2}\int g^{2}\, dP_{0}+R_{n,g},
\end{equation}
where $\lim_{n\rightarrow\infty}P_{n,0}(|R_{n,g}|>\varepsilon)=0$
for all $\varepsilon>0.$ This expansion implies 
\begin{equation}
L_{n}(g):=\log\left(\frac{P_{n,g}}{P_{n,0}}\right)+\frac{1}{2}\int g^{2}dP_{0}=\frac{1}{\sqrt{n}}\sum_{i=1}^{n}g(X_{i})+R_{n,g}.
\end{equation}
Thus we have the weak convergency $\mathcal{L}(L_{n}(g)|P_{n,0})\rightarrow N(0,\int g^{2}\, dP_{0})$
and for all $a,b\in\mathbb{R},$ $g_{1},g_{2}\in T(P_{0},\mathcal{P})$
and $\varepsilon>0$ we obtain 
\begin{eqnarray*}
 &  & P_{n,0}(|aL_{n}(g_{1})+bL_{n}(g_{2})-L_{n}(ag_{1}+bg_{2})|>\varepsilon)\\
 & = & P_{n,0}(|aR_{n,g_{1}}+bR_{n,g_{2}}+R_{n,ag_{1}+bg_{2}}|>\varepsilon)\\
 & \rightarrow & 0\,\mbox{ for }\, n\rightarrow\infty
\end{eqnarray*}
This conclude the proof. 
\end{proof}

\begin{proof}
\emph{of Theorem \ref{one_two_sided_easy}} Let $(P_{t_{n}})_{n\in\mathbb{N}}\in\mathcal{F}$
and let $f:t\mapsto P_{t}$ be an associated $L_{2}(P_{0})$-differentiable
curve in $\mathcal{P}$ with tangent $g\in L_{2}(P_{0})$ such that
$f(t_{n})=P_{t_{n}}$ for all $n\in\mathbb{N}$. For $t:=\lim_{n\rightarrow\infty}n^{\frac{1}{2}}t_{n}$
we obtain 
\begin{equation}
\vartheta=\lim_{n\rightarrow\infty}\sqrt{n}(k(P_{t_{n}})-k(P_{0}))=t\int g\widetilde{k}\, dP_{0}.\label{vartheta_low}
\end{equation}
The ULAN property of $(P_{t_{n}})_{n\in\mathbb{N}}$ implies 
\begin{equation}
\left|P_{t_{n}}^{n}-P_{s_{n}}^{n}\right|\rightarrow0\,\mbox{ for }\, n\rightarrow\infty,\label{ulan}
\end{equation}
where $s_{n}:=n^{-\frac{1}{2}}t$, see Strasser(1985) and Hájek et.
al.(1999) for details. Let $\psi:(\Omega^{n},\mathcal{A}^{n})\rightarrow[0,1]$
be sequences of tests. For each subsequence $(n_{j})_{j\in\mathbb{N}}$
we obtain 
\begin{equation}
\lim_{j\rightarrow\infty}\int\psi\, dP_{t_{n_{j}}}^{n_{j}}=\lim_{j\rightarrow\infty}\int\psi\, dP_{s_{n_{j}}}^{n_{j}}=\lim_{j\rightarrow\infty}\int\psi\, dP_{n_{j},tg}\label{ulan_eq}
\end{equation}
if one of the limits exists. The last equality holds, since the asymptotical
distributions of $\frac{dP_{n,tg}}{dP_{n,0}}$ and $\frac{dP_{s_{n}}^{n}}{dP_{0}^{n}}$
under $P_{0}^{n}$ are equal, see Strasser (1985). Let $\psi_{1n}$
be a some sequence of the test for $\mathcal{H}_{1}$ against $\mathcal{K}_{1}$,
which have level $\alpha$ property asymptotically. By $(\ref{vartheta_low})$
we obtain, that $\psi_{1n}$ is a sequence of tests for the linear
test problem $\{h\in T(P_{0},\mathcal{P}):\int h\widetilde{k}\, dP_{0}\leq0\}$
against $\{h\in T(P_{0},\mathcal{P}):\int h\widetilde{k}\, dP_{0}>0\}.$
Lemma 82.6 from Strasser (1985) and (\ref{ulan_eq}) imply 
\[
\limsup_{n\rightarrow\infty}\int\psi_{1n}dP_{t_{n}}=\limsup_{n\rightarrow\infty}\int\psi_{1n}dP_{n_{j},tg}\leq\Phi\left(\frac{\vartheta}{\left\Vert \widetilde{k}\right\Vert }-u_{1-\alpha}\right)
\]
if $(P_{t_{n}})_{n\in\mathbb{N}}\in\mathcal{H}_{1}$ is an implicite
alternative and 
\[
\liminf_{n\rightarrow\infty}\int\psi_{1n}dP_{t_{n}}=\liminf_{n\rightarrow\infty}\int\psi_{1n}dP_{n_{j},tg}\geq\Phi\left(\frac{\vartheta}{\left\Vert \widetilde{k}\right\Vert }-u_{1-\alpha}\right)
\]
if $(P_{t_{n}})_{n\in\mathbb{N}}\in\mathcal{K}_{1}$ is an implicite
hypothese. Thus the asymptotic power functions of tests $\varphi_{1,n}$
is the upper bound for the asymptotic power functions of sequences
of tests for $\mathcal{H}_{1}$ against $\mathcal{K}_{1}$, which
have level $\alpha$ property asymptotically. This proves part $(a)$
of theorem \ref{one_two_sided_easy}. The proof of part $(b)$ of
theorem \ref{one_two_sided_easy} runs analogously under the application
of lemma 82.12 of Strasser (1985). \end{proof}


\begin{thebibliography}{10}
\bibitem{Bickel:1993} P. Bickel, C.A.J. Klaassen, Y. Ritov, J.A.
Wellner, \emph{Efficient and adaptive estimation for semiparametric
models.} Johns Hopkins University Press, Baltimore 1993.

\bibitem{Hajek:1999} J. Hájek, Z. Sidak, K.P. Sen, \emph{Theory of
rank tests}. Academic Press, 1999.

\bibitem{Ibragimov:81} I.A. Ibragimov, R.Z. Hasminskii, \emph{Statistical
estimation}. Springer, 1981.

\bibitem{LeCam:86} L. Le Cam, \emph{Asymptotic methods in statistical
decision theory}. Springer, 1986.

\bibitem{LeCam:00} L. Le Cam, G.L. Yang, \emph{Asymptotics in statistics:
some basic concepts}. Springer, 2000.

\bibitem{Janssen:99a} A. Janssen. \emph{Testing nonparametric statistical
functionals with applications to rank tests.} J. Statist. Plann. Infer.,
No. 81, 71-93, 1999.

\bibitem{Janssen:99b} A. Janssen. \emph{Nonparametric symmetry tests
for statistical functionals.} Math. Methods Stat., No.3, 320-343,
1999.

\bibitem{Janssen:00a} A. Janssen. \emph{Nonparametric bioequivalence
tests for statistical functionals and their efficient power functions.}
Statistics and Decisions, No. 18, 49-78, 2000.

\bibitem{Janssen:00b} A. Janssen. \emph{Global power functions of
goodness of fit tests.} Ann. Stat., No.1, 239-253, 2000.

\bibitem{Janssen:04} A. Janssen. \emph{Asymptotic relative efficiency
of tests at the boundary of regular statistical models.} J. Statist.
Plann. Infer., No. 126, 461-477, 2004.

\bibitem{Pfanzagl:82} J. Pfanzagl, W. Wefelmeyer. \emph{Contributions
to a general asymptotic statistical theory}, volume 13 of \emph{Lecture
Notes in Statistics}, Springer, 1982.

\bibitem{Pfanzagl:85} J. Pfanzagl, W. Wefelmeyer. \emph{Asymptotic
expansions for general statistical models}, volume 31 of \emph{Lecture
Notes in Statistics}. Springer, 1985.

\bibitem{Romano:05} P.J. Romano, \emph{Optimal testing of equivalence
hypotheses. }Ann. Stat. 33, No.3, 1036-1047, 2005.

\bibitem{Strasser:85} H. Strasser. Mathematical theory of statistics:
\emph{Statistical experiments and asymptotic decision theory}, volume
7 of\emph{ De Gruyter Studies in Mathematics}, de Gruyter, 1985.

\bibitem{Strasser:98} H. Strasser.\emph{ Differentiability of statistical
experiments.} Statistics and Decisions, No. 16, 113-130, 1998.

\bibitem{Vaart:85} A.W. van der Vaart. \emph{Statistical estimation
in large parameter spaces}, volume 44 of \emph{CWI Tract.} Center
for Mathematics and Computer Science, Amsterdam, 1985.

\bibitem{Vaart:91} A.W. van der Vaart. \emph{An asymptotic representation
theorem}, Int. Stat. Rev 59, No.1, 97-121, 1991.

\bibitem{Vaart:98} A.W. van der Vaart. \emph{Asymptotic statistics},
Cambridge Univ. Press., 1998.\end{thebibliography}
\end{document}